\newenvironment{proof}{
    {\bf Proof}\quad}{\hbox{\ }\ \hfil $|||$\break 
    }
\newcommand{\ed}{\mathbf{e}}
\newcommand{\ei}{{\mathtt{e}_1}}
\newcommand{\ej}{{\mathtt{e}_2}}
\definecolor{lightgrey}{rgb}{0.35,0.35,0.35}
\definecolor{darkgreen}{rgb}{0,0.4,0}
\definecolor{orange}{rgb}{1,0.5,0}
\newtheorem{lemma}{Lemma}
\newcommand{\nbhd}{\mathcal{N}}
\newcommand{\bx}{\mathbf{x}}
\newcommand{\by}{\mathbf{y}}
\newcommand{\iga}{iso-geometric analysis}
\newcommand{\fsol}{u}
\newcommand{\jet}{\mathbf{j}}
\newcommand{\Boxi}{\Box_{1}}
\newcommand{\Boxj}{\Box_{2}}
\newcommand{\fn}{\phymap}  
\newcommand{\phydom}{X}
\newcommand{\phymap}{\bx}
\newcommand{\phydim}{d} 
\newcommand{\repar}{\rho} 
\newcommand{\nphy}{n} 
\newcommand{\unitdom}{\Box}
\newcommand{\R}{\mathbb{R}}
\title {Matched $G^k$-constructions yield $C^k$-continuous 
iso-geometric elements}
\author{J\"org Peters, University of Florida}
\begin{document}
\maketitle
\begin{abstract}
We show how $G^k$ (geometrically continuous surface)
constructions yield $C^k$ iso-geometric elements
also at irregular points in a quad-mesh where three or more than four 
elements come together.
\end{abstract}

\section{Introduction}
Change of coordinates, a.k.a.\ reparameterization, is the concept 
shared by $G^k$ constructions of manifolds and by \iga\ and prediction 
of physical properties.
$G^k$ continuity is a notion used to characterize constructions 
of $C^k$ surfaces by joining two pieces along
a common boundary curve so that their derivatives match after 
reparameterization 
(see e.g.\ \cite{gregory87a,Boehm88,Peters:2002:GC}). 
Linear combinations of iso-geometric elements serve to approximately 
compute the solution of a differential equation 
over a bounded, often geometrically non-trivial, region.
Iso-geometric elements are higher-order
iso-parametric elements of classical engineering analysis \cite{Irons68}.
The term iso-geometric was used in \cite{Hughes2005b}
to highlight the case when the region and the image of the elements 
are spanned by the same space of functions,
typically tensor-product splines \cite{Boor:2002:BB}).

To date, the iso-geometric approach has not been investigated at points
where more or fewer than four tensor-product elements meet \emph{smoothly}.
Such points are called irregular, extraordinary or star points.
Since the smooth joining of surface pieces at irregular points is governed by
$G^k$ relations, it is natural to apply the concept of
geometric continuity to constructing everywhere differentiable
iso-geometric elements. This paper shows that
when both the region and the image of the iso-geometric elements
are from the same space of $G^k$ continuous maps then the 
iso-geometric elements are $C^k$.
This observation formed the background for the author's presentations 
in early 2014 \cite{Peters:talks:2014} 
and the publication \cite{Nguyen:2014:CSS}.

\section{Geometric continuity and Iso-geometric elements}

Geometric continuity refers to matching geometric invariants.
However for practical constructions, the following parameterization-based
definition of matching derivatives after a change of coordinates
is widely accepted and equivalent in most relevant cases
\cite[Sect 3]{Peters:2002:GC}. 

\paragraph{Geometric continuity.}
For $i=1,2$, let $\Box_i$ be an $m$-dimensional polytope,
for example a unit cube in $\R^m$.
Let $\ei(s)$, $s\in \R^{m-1}$ parameterize an ${m\!-\!1}$-dimensional 
facet of $\Boxi$ with open neighborhood $\nbhd(\ei) \subsetneq \R^m$ and
$\ej(s)$ parameterize an $m\!-\!1$-dimensional facet of $\Boxj$
such that there exists an invertible $C^k$ reparameterization
\begin{equation}
   \repar: \nbhd(\ei) \to \nbhd(\ej), \qquad
   \repar(\ei) = \ej, \quad
   \nbhd(\ei)\cap\Boxi \to \nbhd(\ej)\cap (\R^m\backslash\Boxj).
   \label{eq:repar}
\end{equation}
Let $\fn_1,\fn_2: \Box \subsetneq\R^m \to \R^d$ be two maps 
whose images join along  
a common interface 
$\ed(s) := \fn_2(\ej(s)) = \fn_2(\repar(\ei(s))) = \fn_1(\ei(s))$.
Denote as the $k$-jet of a map $f$ at $\by$ as $\jet^k_\by f$.
Then two $C^k$ maps $\fn_1$ and $\fn_2$ \emph{join $G^k$}  along $\ed$
with reparameterization $\repar$ if for every point $\ei(s)$ of $\ei$
\begin{align}
   \jet^k_{\ei(s)} \fn_1 = \jet^k_{\ei(s)} (\fn_2\circ\repar),
\end{align}
where $\circ$ denotes composition. That is, 
$\fn_1$ and $\fn_2\circ\repar$ form a $C^k$ function
on $\nbhd(\ei)$.

When $m=2$, $d=3$ and $\fn_i$ are tensor-product splines
then $\Box$ is a rectangle and $\ed$ is a boundary curve shared by the 
patches $\fn_1(\Box)$ and $\fn_2(\Box)$ (see Fig.~1).
Such pairwise $G^k$ constructions are used to construct 
surfaces where three or more than four tensor-product splines
are to be joined smoothly to enclose a point, since 
placing the rectangular domains directly
into $\R^2$ to form a joint domain from
where to map out a neighborhood of the point
yields an embedding only if four rectangles meet.

\paragraph{Iso-geometric elements.}
In the iso-parametric approach to solving partial differential equations,
maps $\phymap_i$, $i=1..\nphy$, typically splines mapping into $\R^{d}$,
$d=2$ or $d=3$,
parameterize a region or manifold $\phydom$ called \emph{physical domain}.
The physical domain is tessellated into pieces $\phymap_i(\unitdom_i)$,
\begin{align}
   \phydom := \cup_{i=1}^\nphy \phymap_i(\unitdom_i) \subset \R^d,\qquad
   \phymap_i:\ & \unitdom_i\subsetneq\R^m \to \R^\phydim.
\end{align}
(That is the interiors of $\phymap_i(\unitdom_i)$ are disjoint.)
In the following we assume that $\phymap_i$ is injective on their domain,
hence $\phymap^{-1}_i$ is well-defined.

When $m=2$, i.e.\ in two variables, the $\Box_i$ are 
for example unit squares. If $m=2$ and $d=2$ then 
$\phydom$ is a region of the $xy$-plane.
If $m=2$ and $d=3$, $\phydom$ is a surface embedded in $\R^3$.

The goal of the iso-geometric approach is to compute 
functions $\fsol_i$ so that a linear combination of
the maps $\fsol_i \circ \phymap^{-1}_i$ solves a
partial differential equation on $\phymap_i(\unitdom_i)$.
Without loss of generality, we 
choose $\fsol_i$ to be scalar-valued, i.e.\ $\fsol_i: \unitdom_i \to \R$.
The composition $\fsol_i \circ \phymap^{-1}_i$
is called \emph{iso-geometric element} if both 
$\fsol_i$ and the coordinates $\phymap_i$ are functions from the same
space, typically a space of spline functions.

\begin{figure}
   \label{fig:g1}
   \centering
       \psfrag{R2}{$\R^m$}
       \psfrag{R3}{$\R^d$}
       \psfrag{R1}{$\R$}
       \psfrag{ed}{$\ed$}
       \psfrag{ei}{$\nbhd(\ei)$}
       \psfrag{ej}{$\nbhd(\ej)$}
       \psfrag{phi}{$\phi$}
       \psfrag{psi}{$\psi$}
       \psfrag{sq1}{$\Boxi$}
       \psfrag{sq2}{$\Boxj$}
       \psfrag{rep}{$\repar$}
       \psfrag{f}{$\fsol_1$}
       \psfrag{g}{$\fsol_2$}
       \psfrag{p}{$\phymap_1$}
       \psfrag{q}{$\phymap_2$}
       \includegraphics[width = 0.75\linewidth]{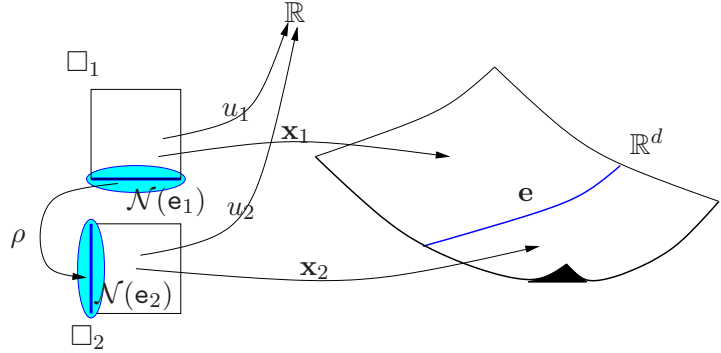}
   \caption{Iso-geometric elements and $G^1$ continuity} 
\end{figure}

\section{Smoothness of the composition}
We want to show that $G^k$ constructions yield $C^k$ iso-geometric elements.
The proof is inspired by the definition of a smooth function on a manifold.

\begin{lemma}[$G^k$ construction yields $C^k$ iso-geometric element]
For $i=1,2$, consider $C^k$ maps $\fsol_i$ and $\phymap_i$ that 
map $\Box_i$ into $\R^d$, $i=1,2$ and $\phymap_i$ is injective.
Let $\repar$ be an invertible $C^k$ reparameterization 
according to \eqref{eq:repar} and
let $\phymap_1$ join $G^k$ with $\phymap_2$ according to  
$\jet^k_{\ei(s)} \phymap_1 = \jet^k_{\ei(s)} (\phymap_2\circ\repar)$ and 
$\fsol_1$ join $G^k$ with $\fsol_2$ according to  
$\jet^k_{\ei(s)} \fsol_1 = \jet^k_{\ei(s)} (\fsol_2\circ\repar)$.
Then the iso-geometric maps
$\fsol_1\circ \phymap_1^{-1}$ and $\fsol_2\circ \phymap_2^{-1}$ 
form a $C^k$ function on $\phymap_1(\Boxi) \cup \phymap_2(\Boxj)$.
\end{lemma}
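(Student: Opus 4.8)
The plan is to imitate the definition of a $C^k$ function on a manifold: build a single $C^k$ coordinate map $\Phi$ that straddles the interface $\ed$, write the combined iso-geometric function in that one chart as $U\circ\Phi^{-1}$, and then apply the chain rule. First I would dispose of the interior: away from $\ed$, on the relative interior of each piece $\phymap_i(\Box_i)$, the parameterization $\phymap_i$ is a regular $C^k$ map with $C^k$ inverse onto its image, so $\fsol_i\circ\phymap_i^{-1}$ is a composition of $C^k$ maps and hence $C^k$. (When $d>m$ the inverse is that of the embedding onto the $m$-surface $\phymap_i(\Box_i)$, whose $C^k$-smoothness follows from the inverse function theorem applied to a local coordinate projection.) All the work is thus concentrated in a neighborhood of $\ed$.

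Next I would glue the two domains across the facet by means of $\repar$. On $\nbhd(\ei)$ define $\Phi$ to equal $\phymap_1$ on $\nbhd(\ei)\cap\Boxi$ and to equal $\phymap_2\circ\repar$ on $\nbhd(\ei)\setminus\Boxi$; since $\repar$ carries $\nbhd(\ei)\setminus\Boxi$ into $\nbhd(\ej)\cap\Boxj$, this uses only the given maps. The hypothesis $\jet^k_{\ei(s)}\phymap_1=\jet^k_{\ei(s)}(\phymap_2\circ\repar)$ at every point $\ei(s)$ of the facet is exactly the assertion that $\phymap_1$ and $\phymap_2\circ\repar$ form a $C^k$ function on $\nbhd(\ei)$, so $\Phi$ is a single $C^k$ map. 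Applying the identical construction to $\fsol$, the map $U$ that equals $\fsol_1$ on $\nbhd(\ei)\cap\Boxi$ and $\fsol_2\circ\repar$ on $\nbhd(\ei)\setminus\Boxi$ is $C^k$ on $\nbhd(\ei)$ by the hypothesis $\jet^k_{\ei(s)}\fsol_1=\jet^k_{\ei(s)}(\fsol_2\circ\repar)$.

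Then I would identify the combined function with $U\circ\Phi^{-1}$. On the $\Boxi$ side $\Phi^{-1}=\phymap_1^{-1}$, so $U\circ\Phi^{-1}=\fsol_1\circ\phymap_1^{-1}$; on the $\Boxj$ side a point $\phymap_2(\by)$ near $\ed$ has $\by=\repar(\bx)$ with $\bx\in\nbhd(\ei)\setminus\Boxi$, whence $\phymap_2(\by)=\Phi(\bx)$ and $U(\Phi^{-1}(\phymap_2(\by)))=\fsol_2(\repar(\bx))=\fsol_2(\by)$, i.e.\ $U\circ\Phi^{-1}=\fsol_2\circ\phymap_2^{-1}$. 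Since $\Phi(\nbhd(\ei))$ is a neighborhood of $\ed$ in $\phymap_1(\Boxi)\cup\phymap_2(\Boxj)$, the composite $U\circ\Phi^{-1}$ coincides there with the combined iso-geometric function. As $\Phi$ is a regular $C^k$ embedding, $\Phi^{-1}$ is $C^k$, and the composition of the $C^k$ maps $U$ and $\Phi^{-1}$ is $C^k$ near $\ed$; together with the interior step this gives $C^k$-continuity on all of $\phymap_1(\Boxi)\cup\phymap_2(\Boxj)$.

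The one point that deserves care, and which I expect to be the crux, is regularity: both the interior argument and the $C^k$-smoothness of $\Phi^{-1}$ require the parameterizations to have full rank, so that their embedding inverses are $C^k$—injectivity alone, as literally stated in the setup, is not enough. I would dispatch this by noting that regularity is the standing assumption that makes $\fsol_i\circ\phymap_i^{-1}$ a $C^k$ element at all, and that the glued $\Phi$ inherits full rank from $\phymap_1$ and $\phymap_2$ together with the invertibility of $\repar$.
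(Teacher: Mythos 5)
Your proof is correct, but it takes a genuinely different route from the paper's. The paper argues pointwise with jets along the interface: from $\jet^k_{\ei}(\phymap_2\circ\phymap_2^{-1}\circ\phymap_1)=\jet^k_{\ei}\phymap_1=\jet^k_{\ei}(\phymap_2\circ\repar)$ it extracts the crux identity $\jet^k_{\ei}\repar=\jet^k_{\ei}(\phymap_2^{-1}\circ\phymap_1)$ --- the prescribed reparameterization is forced to agree to $k$-th order with the transition map --- and then chains jet compositions to conclude $\jet^k_{\ed}(\fsol_1\circ\phymap_1^{-1})=\jet^k_{\ed}(\fsol_2\circ\phymap_2^{-1})$ at every point of $\ed$. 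You instead take literally the paper's stated inspiration (a smooth function on a manifold) and build a single chart $\Phi$ ($\phymap_1$ on $\nbhd(\ei)\cap\Boxi$, $\phymap_2\circ\repar$ on $\nbhd(\ei)\setminus\Boxi$) together with the glued scalar map $U$, exhibiting the combined element as $U\circ\Phi^{-1}$; the crux identity never appears explicitly because $\repar$ is by construction the transition between $\Phi$ and $\phymap_2$. Your version buys two things: the gluing happens in the parameter domain across a \emph{flat} facet, where ``matching one-sided $k$-jets implies $C^k$'' is the standard, least delicate case; and the chart $\Phi$ simultaneously gives precise meaning to ``$C^k$ function on $\phymap_1(\Boxi)\cup\phymap_2(\Boxj)$'' when $d>m$, where the union is a surface and smoothness must be read through charts --- a step the paper's computation leaves implicit, since matching jets along $\ed$ of two one-sidedly defined functions must still be converted into a $C^k$ statement on the union, which is essentially your gluing argument performed downstream. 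The paper's version buys brevity and isolates the conceptual point that $G^k$ data forces $\repar$ to be the chart transition. Your regularity caveat is well taken and cuts both ways: the paper writes $\jet^k_{\ed}\phymap_1^{-1}$ and $\jet^k_{\ed}\phymap_2^{-1}$, which already presupposes that $\phymap_i^{-1}$ is $C^k$, i.e., full rank of the differential of $\phymap_i$ along $\ed$, not merely injectivity as literally stated; so both proofs rely on the same silent strengthening of the hypothesis. One small point worth making explicit in your write-up: global (not just local) invertibility of $\Phi$ on a sufficiently small $\nbhd(\ei)$ follows from injectivity of each piece together with the disjointness of the interiors of $\phymap_1(\Boxi)$ and $\phymap_2(\Boxj)$, so $\Phi^{-1}$ is well defined on a full two-sided neighborhood of $\ed$.
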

\begin{proof}
Denote by $\ei(s)$ the pre-image of the point $\ed(s)$ under $\phymap_1$
and  by $\ej(s)$ the pre-image of the point $\ed(s)$ under $\phymap_2$.
Below we drop the argument $s$
with the understanding that equalities hold pointwise.
By assumption 
\begin{equation}
   \jet^k_{\ei}(\phymap_2\circ \phymap_2^{-1}\circ\phymap_1)
   =
   \jet^k_{\ei}\phymap_1
   =
   \jet^k_{\ei} (\phymap_2\circ\repar) 
\end{equation}
and hence, by injectivity of $\phymap_2$, 
$\jet^k_{\ei}\repar = \jet^k_{\ei} (\phymap_2^{-1}\circ\phymap_1)$.
Then
\begin{align*}
\jet^k_\ed (\fsol_1 \circ \phymap^{-1}_1) 
&=
\jet^k_\ei \fsol_1 
\circ 
\jet^k_\ed \phymap^{-1}_1
=
\jet^k_\ei (\fsol_2 \circ\repar) 
\circ
\jet^k_\ed (\phymap_1^{-1} \circ\phymap_2 \circ\phymap^{-1}_2)
\\
&=
\jet^k_\ej \fsol_2
\circ
\jet^k_\ei\repar
\circ
\jet^k_\ej (\phymap_1^{-1} \circ\phymap_2)
\circ
\jet^k_\ed \phymap^{-1}_2 
\\
&=
\jet^k_\ej \fsol_2
\circ
\jet^k_\ej (\phymap_2^{-1}\circ\phymap_1\circ \phymap_1^{-1} \circ\phymap_2)
\circ
\jet^k_\ed \phymap^{-1}_2
\\
&=
\jet^k_\ed (\fsol_2\circ \phymap_2^{-1}).
\end{align*}
\end{proof}

In consequence, all $G^k$ surface constructions in the literature
can directly be used to solve differential equations not only on surfaces
but also on planar regions where $n\ne4$ pieces come together.

\paragraph{Acknowledgements}
Work supported by the National Science Foundation under grant CCF-1117695.

\bibliographystyle{alpha}
\bibliography{p}

\end{document}